\newtheorem{thm}{Theorem}[section]
\newtheorem{defn}[thm]{Definition}
\def\~{\sim}
\def\sf{\mathsf}
\newcommand{\FS}{\mathsf{FS}}
\newtheorem{theorem}{Theorem}[section]
\newtheorem{proposition}[theorem]{Proposition}
\newtheorem{corollary}[theorem]{Corollary}
\newtheorem{problem}[theorem]{Problem}
\newtheorem{lemma}[theorem]{Lemma}
\newcommand{\Spider}{\mathsf{Spider}}
\begin{document}

\title{The connectedness of the friends-and-strangers graph of a lollipop and others}

\author{Lanchao WANG,  Yaojun CHEN\thanks{Corresponding author. 
 Email: 
\href{mailto://yaojunc@nju.edu.cn}{ yaojunc@nju.edu.cn}.
}}
 \affil{ { \small {Department of Mathematics, Nanjing University, Nanjing 210093, China}}
 }
\date{}
\maketitle

\begin{abstract}
Let $X$ and $Y$ be any two graphs of order $n$. The friends-and-strangers graph $\mathsf{FS}(X,Y)$ of $X$ and $Y$ is a graph with vertex set consisting of all bijections $\sigma :V(X) \mapsto V(Y)$, in which two bijections $\sigma$, $\sigma'$ are adjacent if and only if they differ precisely on two adjacent vertices of $X$, and the corresponding mappings are adjacent in $Y$. The most fundamental question that one can ask about these friends-and-strangers graphs is whether or not they are connected.  Let $\mathsf{Lollipop}_{n-k,k}$ be a lollipop graph of order $n$ obtained by  identifying one end of a path of order $n-k+1$ with a vertex of a complete graph of order $k$. Defant and Kravitz started to study the connectedness of $\FS(\mathsf{Lollipop}_{n-k,k},Y)$. In this paper, we give a sufficient and necessary condition for $\FS(\mathsf{Lollipop}_{n-k,k},Y)$ to be connected  for all $2\leq k\leq n$.


\end{abstract}

\section{Introduction}

 All graphs considered in this paper are simple without loops. Let $G=(V(G),E(G))$ be a graph. We use $\delta(G)$ and $\Delta(G)$  to denote its minimum and maximum degree, respectively. For $S\subseteq V(G)$, $G|_S$ denotes the subgraph of $G$ induced by $S$. Let $\sf{Path}_n$, $\sf{Cycle}_n$, $\sf{Star}_n$ and $\sf{Complete}_n$ denote a path, a cycle, a star and a complete graph of order $n$, respectively.   Let $\mathsf{Lollipop}_{n-k,k}$ be a lollipop graph of order $n$ obtained by  identifying one end of a $\sf{Path}_{n-k+1}$ with a vertex of a $\sf{Complete}_k$. A graph $G$ is $\ell$-connected if the resulting graph is still connected by removing any $\ell-1$ vertices from $G$.
For two graphs $G$ and $H$, we let $G\cup H$ denote the disjoint union of $G$ and $H$.

The friends-and-strangers graphs were introduced by Defant and Kravitz \cite{DK}, which are defined as follows. 

 \begin{defn} 
Let $X$ and $Y$ be two graphs, each with $n$ vertices. The friends-and-strangers graph $\mathsf{FS}(X,Y)$ of $X$ and $Y$ is a graph with vertex set consisting of all bijections from $V(X)$ to $V(Y)$, two such bijections $\sigma$, $\sigma'$ are adjacent if and only if they differ precisely on two adjacent vertices, say $a,b\in X$ with $\{a,b\}\in E(X)$, and the corresponding mappings are adjacent in $Y$, i.e.
\begin{itemize}
\item  $\{\sigma(a), \sigma(b)\} \in E(Y)$;
\item  $\sigma(a)=\sigma'(b)$, $\sigma(b)=\sigma'(a)$ and $\sigma(c)=\sigma'(c)$ for all $c\in V(X)\backslash \{a,b\}.$
\end{itemize}
\end{defn}

The friends-and-strangers graph $\mathsf{FS}(X,Y)$ can be interpreted as follows.  View $V(X)$ as $n$ positions and $V(Y)$ as $n$ people.  Two people are friends if and only if they are adjacent in $Y$ and two positions are adjacent if and only if they are adjacent in $X$. A bijection from $V(X)$ to $V(Y)$ represents $n$ people standing on these $n$ positions such that each person stands on precisely one position.
 At any point of time, two people can swap their positions if and only if they are friends and the two positions they stand are adjacent. A natural question is  how various configurations can be reached from other configurations when  multiple such swaps are allowed. This is precisely the information that is encoded in $\mathsf{FS}(X,Y)$. Note that the components of $\mathsf{FS}(X,Y)$ are the equivalence classes of mutually-reachable (by the multiple swaps described above) configurations, so the connectivity,  is the basic aspect  of interest in  friends-and-strangers graphs. 


The questions and results in literature on the friends-and-strangers graph $\mathsf{FS}(X,Y)$ roughly fall in three types
\begin{itemize}

\item The structure of $\mathsf{FS}(X,Y)$ when at least one of $X,Y$ are specific graphs, such as paths, cycles, lollipop graphs, spider graphs and so on, \cite{DDLW}, \cite{DK}, \cite{L}, \cite{J1}, \cite{W}.  
\item The structure  of $\FS(X,Y)$ when none of $X,Y$ is specific graph, such as minimum degree conditions on $X$ and $Y$, the case when $X$ has a Hamiltonian path, the non-polynomially bounded diameters and so on,  \cite{ADK}-\cite{DK}, \cite{J1}, \cite{J2}.
\item The structure of $\mathsf{FS}(X,Y)$ when both $X$ and $Y$ are random graphs, \cite{ADK}, \cite{M}, \cite{Wang}. 

 \end{itemize}

 We note that Milojevic \cite{M} also studied a new model of friends-and-strangers graphs.

 The structure of $\mathsf{FS}(X,Y)$ when  $X,Y$ belong to the first type is a basic question on the topic related to friends-and-strangers graphs, and the results on this type can also be used to study the other two types. For example, Alon, Defant and Kravitz \cite{ADK} used the structure of $\FS(\sf{Star}_n,Y)$ and $\FS(\sf{Lollipop}_{n-3,3},\sf{Star}_n)$  in  researching  the random aspect of friends-and-strangers graphs and minimum degree conditions on $X, Y$ for the connectedness of $\FS(X,Y)$, respectively;  Jeong \cite{J2} used the structure of 
 $\FS(\sf{Cycle}_n,Y)$ to investigate the connectedness of $\FS(X,Y)$ when $X$ is 2-connected. 
 
Fix $X=\sf{Path}_n$ or $\sf{Complete}_n$, the connectedness of $\FS(X,Y)$ is characterized as follows.
\begin{theorem}\cite{DK}\label{path}
 Let $Y$ be a graph on $n$ vertices. Then  $\FS(\sf{Path}_n,Y)$ is connected if and only if $Y$ is complete.
\end{theorem}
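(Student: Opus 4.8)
The plan is to prove both implications directly. For the easy direction, suppose $Y$ is complete. Then every pair of people are friends, so at any configuration we may freely exchange the two people occupying an adjacent pair of path-positions $i, i+1$; that is, every adjacent transposition of positions is realizable as an edge of $\FS(\mathsf{Path}_n, Y)$, no matter who stands where. Since the adjacent transpositions $(i\; i+1)$ generate the full symmetric group $S_n$, any bijection can be carried to any other by a sequence of legal swaps, and so $\FS(\mathsf{Path}_n, Y)$ is connected.

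For the converse I would argue by contraposition: assuming $Y$ is not complete, I exhibit a quantity that is constant on each connected component yet takes different values on two explicit bijections, forcing disconnectedness. Label the positions $1, 2, \ldots, n$ in their natural order along the path, and fix two people $u, v \in V(Y)$ with $\{u,v\} \notin E(Y)$; such a pair exists precisely because $Y$ is not complete, and in particular $n \geq 2$. For a bijection $\sigma$ let the invariant record the relative order of $u$ and $v$ along the path, namely whether $\sigma^{-1}(u) < \sigma^{-1}(v)$ or $\sigma^{-1}(u) > \sigma^{-1}(v)$.

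The crux is to verify that this relative order is preserved along every edge of $\FS(\mathsf{Path}_n, Y)$. An edge corresponds to swapping the two people at some adjacent positions $i, i+1$, and by definition those two people must be adjacent in $Y$. Because $\{u,v\} \notin E(Y)$, the pair $\{u,v\}$ is never the swapped pair, so at most one of $u, v$ can move during a single swap, and if it moves it only shifts by one position. A brief case analysis finishes it: if neither moves the order is trivially unchanged, while if exactly one moves, the stationary member sits at some position $j \notin \{i, i+1\}$, hence either $j \leq i-1$ (left of both $i$ and $i+1$) or $j \geq i+2$ (right of both), and in either case the left-right order of $u$ and $v$ cannot be reversed. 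Thus the invariant is constant on each connected component.

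Finally I would produce two bijections with opposite values: take any $\sigma$ with $\sigma(1) = u$, $\sigma(2) = v$ and an arbitrary fixed placement of the remaining people, and let $(\sigma')$ be obtained from $\sigma$ by exchanging the images of positions $1$ and $2$. Then $\sigma^{-1}(u) < \sigma^{-1}(v)$ while $(\sigma')^{-1}(u) > (\sigma')^{-1}(v)$, so $\sigma$ and $\sigma'$ lie in distinct components and $\FS(\mathsf{Path}_n, Y)$ is disconnected. I anticipate no real obstacle here; the single point demanding care is the invariance claim, i.e. that no indirect sequence of legal swaps can flip the order of a non-adjacent pair. The conceptual heart of that claim is simply that each legal swap is an adjacent transposition of positions and therefore reverses the relative order of exactly the one pair it exchanges, which by hypothesis is never $\{u,v\}$.
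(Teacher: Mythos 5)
Your proof is correct. Note that the paper you were working against does not actually prove this statement: Theorem~\ref{path} is quoted from Defant and Kravitz \cite{DK}, so there is no in-paper argument to compare with. In \cite{DK} the result is obtained as a corollary of a stronger structural theorem: each bijection $\sigma$ induces an orientation of the complement $\overline{Y}$ (orient a non-edge $\{u,v\}$ of $Y$ according to whether $\sigma^{-1}(u)<\sigma^{-1}(v)$), this orientation is acyclic and invariant along edges of $\FS(\mathsf{Path}_n,Y)$, and in fact the connected components of $\FS(\mathsf{Path}_n,Y)$ are in bijection with the acyclic orientations of $\overline{Y}$; connectedness then forces $\overline{Y}$ to have a unique acyclic orientation, i.e.\ no edges at all. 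Your argument is exactly the ``invariance'' half of that machinery, specialized to a single non-edge $\{u,v\}$, which is all one needs for disconnectedness; your case analysis (the swapped pair is never $\{u,v\}$, and a swap of positions $i,i+1$ can only reverse the relative order of the pair actually exchanged) is the same local computation \cite{DK} performs to show the induced orientation is constant on components. What you lose relative to \cite{DK} is only the finer information (the exact count of components); what you gain is a shorter, self-contained proof of precisely the stated equivalence, with the easy direction handled, as is standard, by generation of the symmetric group by adjacent transpositions.
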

\begin{theorem}\cite{GR} \label{complete}
 Let $Y$ be a graph on $n$ vertices. Then  $\FS(\sf{Complete}_n,Y)$ is connected if and only if $Y$ is connected.
\end{theorem}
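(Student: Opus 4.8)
The plan is to translate each swap in $\FS(\mathsf{Complete}_n,Y)$ into multiplication by a transposition and then reduce the whole question to a classical fact about generating the symmetric group. View a configuration as a bijection $\sigma\colon V(X)\to V(Y)$, where $V(X)=V(\mathsf{Complete}_n)$ is the set of positions. A single swap across positions $a,b$ sends $\sigma$ to $\sigma'$ with $\sigma'(a)=\sigma(b)$ and $\sigma'(b)=\sigma(a)$; writing $y=\sigma(a)$ and $y'=\sigma(b)$, this is exactly $\sigma'=(y\,y')\circ\sigma$, where $(y\,y')$ is a transposition in $\mathrm{Sym}(V(Y))$. The move is legal precisely when $\{y,y'\}\in E(Y)$. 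The crucial point specific to $X=\mathsf{Complete}_n$ is that every pair of positions is adjacent, so for any edge $\{y,y'\}\in E(Y)$ and any configuration $\sigma$, the two people $y,y'$ sit on some pair of positions $a=\sigma^{-1}(y)$ and $b=\sigma^{-1}(y')$, which are automatically adjacent in $X$; hence the transposition $(y\,y')$ is available at every configuration.

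Consequently, the set of configurations reachable from a fixed $\sigma_0$ is exactly the orbit $\{\pi\circ\sigma_0 : \pi\in H\}$, where $H=\langle (y\,y') : \{y,y'\}\in E(Y)\rangle \le \mathrm{Sym}(V(Y))$ is the subgroup generated by the edge-transpositions of $Y$. Since left multiplication by $\mathrm{Sym}(V(Y))$ acts freely and transitively on the $n!$ bijections, $\FS(\mathsf{Complete}_n,Y)$ is connected if and only if this single orbit is everything, i.e. if and only if $H=\mathrm{Sym}(V(Y))$.

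It then remains to prove the standard group-theoretic statement: the edge-transpositions of $Y$ generate $\mathrm{Sym}(V(Y))$ if and only if $Y$ is connected. For the ``only if'' direction I would argue the contrapositive: if $Y$ has components with vertex sets $V_1,\dots,V_m$ with $m\ge 2$, then every edge-transposition lies in the Young subgroup $\mathrm{Sym}(V_1)\times\cdots\times\mathrm{Sym}(V_m)$, so $H$ does too and is proper; equivalently, the partition of positions recorded by $\sigma^{-1}(V_i)$ is invariant under every move, and choosing two bijections with different such partitions exhibits the disconnectedness. For the reverse direction, take a spanning tree $T$ of the connected graph $Y$ and induct on $n$: pick a leaf $v$ of $T$ with neighbor $u$; by induction the transpositions of $T-v$ generate $\mathrm{Sym}(V(Y)\setminus\{v\})$, and conjugating any $(u\,w)$ by $(u\,v)$ yields $(v\,w)$, so together with $\mathrm{Sym}(V(Y)\setminus\{v\})$ these transpositions generate all of $\mathrm{Sym}(V(Y))$.

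I expect the only real content to be this generation lemma, and within it the reverse direction; the forward direction and the reduction to group theory are essentially bookkeeping. The one subtlety to check carefully is the ``available at every configuration'' claim, since it is exactly the place where completeness of $X$ is used and where the argument would break for a non-complete $X$: there the set of usable transpositions would depend on the current placement, and reachability would no longer be described by a single subgroup orbit.
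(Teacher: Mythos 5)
Your proposal is correct. Note, however, that the paper does not prove this statement at all: Theorem \ref{complete} is quoted from the reference [GR] as a known classical fact, so there is no internal proof to compare against. Your argument is the standard one for this result, and all of its steps check out: since $X=\mathsf{Complete}_n$, every swap is left multiplication by an edge-transposition of $Y$ and every edge-transposition is available at every configuration, so components are exactly the right cosets $H\sigma_0$ of the subgroup $H$ generated by edge-transpositions; hence connectedness of $\FS(\mathsf{Complete}_n,Y)$ is equivalent to $H=\mathrm{Sym}(V(Y))$, and the spanning-tree induction (using $(u\,v)(u\,w)(u\,v)=(v\,w)$) together with the Young-subgroup obstruction gives the equivalence of that with connectedness of $Y$. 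Your closing observation is also the right one: the identification of reachable sets with a single subgroup orbit is precisely where completeness of $X$ enters, and it is why the rest of the paper (e.g.\ Propositions \ref{p1} and \ref{p2} for lollipops) must work with configuration-dependent moves instead of a group orbit.
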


Defant and Kravitz \cite{DK} started to consider the connectedness of $\mathsf{FS}(\sf{Lollipop}_{n-k,k},Y)$. They first established a necessary condition in terms of $\delta(Y)$ for $\mathsf{FS}(\sf{Lollipop}_{n-k,k},Y)$ to be connected.

\begin{theorem}\cite{DK}\label{t1}
 Let $Y$ be a graph on $n$ vertices.  If $\FS(\sf{Lollipop}_{n-k,k},Y)$ is connected, then $\delta(Y)\ge n-k+1$.
\end{theorem}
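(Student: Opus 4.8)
I will prove the contrapositive: assuming some vertex $y_0 \in V(Y)$ has $\deg_Y(y_0) = d \le n-k$, I will exhibit two bijections lying in different components of $\FS(\mathsf{Lollipop}_{n-k,k}, Y)$. Write the lollipop with junction $p_0$, pendant path $p_0 p_1 \cdots p_{n-k}$ (so that $p_{n-k}$ is the unique leaf), and clique $\mathsf{Complete}_k$ on $\{p_0, c_1, \dots, c_{k-1}\}$. Since $\deg_Y(y_0) \le n-k$, the person $y_0$ has at least $(n-1)-(n-k)=k-1$ non-neighbours in $Y$; I fix $k-1$ of them, call them the \emph{strangers} $s_1,\dots,s_{k-1}$ of $y_0$. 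The plan is to compare a configuration $\tau_A$ that places $y_0$ on the leaf $p_{n-k}$ while the strangers saturate the deep clique vertices $c_1,\dots,c_{k-1}$, with a configuration $\tau_B$ that places $y_0$ inside the clique while all of $s_1,\dots,s_{k-1}$ sit on the tail, outward of $y_0$.

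To separate them I would orient the pendant path from the clique (inner) to the leaf (outer) and collapse the $k$ clique vertices to a single innermost point, so that each position acquires a height in $\{0,1,\dots,n-k\}$ with every clique vertex at height $0$. The governing observation is that $y_0$ and any stranger $s_i$ can never exchange their relative height by a single move: such an exchange would be a direct swap of $y_0$ with $s_i$ across a lollipop edge, which is forbidden because $\{y_0,s_i\}\notin E(Y)$. On the tail this is immediate, since a path reorders two people only by a direct transposition; at the junction the same holds, because crossing the $p_0 p_1$ bottleneck in opposite directions is again a direct $y_0$--$s_i$ swap. Hence the only conceivable way to reverse an order is the \emph{clique bypass}: $y_0$ and $s_i$ both enter the clique and then separate on the tail side.

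Ruling out this bypass is the step I expect to be the main obstacle, and it is exactly where $d \le n-k$ enters. The $k-1$ strangers precisely saturate the $k-1$ deep clique vertices, so whenever $y_0$ reaches the junction $p_0$ in a configuration of $\tau_A$-type, every $c_j$ is occupied by a stranger; thus $y_0$ is isolated in the friendship graph induced on the clique's occupants, and by Theorem~\ref{complete} (connectivity of $\FS(\mathsf{Complete}_k,\cdot)$) the clique admits no internal reconfiguration moving $y_0$ off $p_0$, while symmetrically no stranger can reach $p_0$ to escape onto the tail. This yields a permanent standoff at the junction, which I would promote to an invariant by tracking the number of strangers separated from the leaf by $y_0$ and verifying it is preserved by every move; the only delicate cases are the clique-internal and bottleneck moves, and these are dispatched by the isolation of $y_0$ together with Theorem~\ref{complete}. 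Since this quantity equals $k-1$ in $\tau_A$ and $0$ in $\tau_B$, the two configurations lie in different components, so $\FS(\mathsf{Lollipop}_{n-k,k},Y)$ is disconnected, contradicting the hypothesis; therefore $\delta(Y) \ge n-k+1$.
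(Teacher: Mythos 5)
Your strategy is sound and is, at its core, the same as the paper's. The paper proves the stronger Proposition \ref{p2} (any disconnected induced subgraph of $Y$ on $k$ vertices forces disconnectedness) by writing such a subgraph as $A\cup B$ with no $A$--$B$ edges and showing that \emph{special} configurations --- those in which some $a_0\in A$ occupies a path position with no $B$-person between the leaf and $a_0$ --- are never adjacent to non-special ones. Your invariant ``every stranger is separated from the leaf by $y_0$'' is exactly this specialness in the case $A=\{y_0\}$, $B=\{s_1,\dots,s_{k-1}\}$ (which is all Theorem \ref{t1} needs, since a vertex of degree at most $n-k$ together with $k-1$ of its non-neighbours induces a disconnected $k$-vertex subgraph), and your saturation argument at the junction is precisely the paper's closing pigeonhole ($k+1$ people cannot occupy the $k$ clique-side positions). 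Your case analysis is also right: once the invariant holds, $y_0$ reaching $p_0$ forces all $k-1$ strangers onto $c_1,\dots,c_{k-1}$, which simultaneously blocks $y_0$'s entry into the clique and the strangers' escape onto the tail, and every other move visibly preserves the invariant.

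Two defects, one of which needs repair. The real one: your $\tau_B$ need not exist. Placing all $k-1$ strangers on the tail requires $k-1\le n-k$, but the theorem is stated for all $2\le k\le n$; when $n<2k-1$ (e.g.\ $k=n$, where the tail is empty, or $n=5$, $k=4$, where the tail has one position) there is no such configuration. The fix is immediate and uses nothing new: take for $\tau_B$ any configuration with $y_0$ on a deep clique vertex $c_j$; then no stranger is separated from the leaf by $y_0$ (from any other position one reaches the leaf through $p_0$ while avoiding $c_j$), so the invariant count is $0<k-1$ no matter where the strangers stand. The cosmetic one: Theorem \ref{complete} is the wrong tool for the standoff --- it is a global connectivity statement about $\FS(\sf{Complete}_k,\cdot)$ and does not by itself identify which clique rearrangements are achievable; what you actually use is the elementary fact that $y_0$, having no friends among the clique's occupants, cannot participate in any swap inside the clique. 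Relatedly, ``preserved by every move'' should be phrased as closure of the set of configurations whose count equals $k-1$: the count can drop under a move from a configuration where it is smaller (e.g.\ $y_0$ at $p_0$ with only one stranger in the clique can step onto a free deep vertex), but no move exits the set where the count is $k-1$, and that closure is all the argument requires.
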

\noindent Moreover, they tried to characterize the connectedness of $\FS(\sf{Lollipop}_{n-k,k},Y)$ in the case when $k=3,5$, and obtained the following.
\begin{theorem}\cite{DK}\label{L1}
Let $Y$ be a graph on $n$ vertices. Then $\FS(\sf{Lollipop}_{n-3,3},Y)$ is connected if and only if $\delta(Y)\ge n-2$.
\end{theorem}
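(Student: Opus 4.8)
The ``only if'' direction is free: taking $k=3$ in Theorem \ref{t1} shows that if $\FS(\sf{Lollipop}_{n-3,3},Y)$ is connected then $\delta(Y)\ge n-2$. The whole content is therefore the converse, and I would organize it around the complement $\overline{Y}$. The hypothesis $\delta(Y)\ge n-2$ says precisely that $\overline{Y}$ has maximum degree at most $1$, i.e.\ $\overline{Y}$ is a matching $M$: each person has at most one \emph{stranger}, and, crucially, among any three people at most one pair is a stranger pair. I relabel the positions so that $\sf{Lollipop}_{n-3,3}$ is a Hamiltonian path with vertex sequence $1,2,\dots,n$ (edges $\{i,i+1\}$) together with the single chord $\{1,3\}$; then positions $1,2,3$ induce the triangle $\sf{Complete}_3$ and $3,4,\dots,n$ is its path tail. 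For a stranger pair $\{p,q\}\in M$ I record its \emph{relative order}: which of $p,q$ occupies the lower-indexed position. The one bookkeeping fact I would isolate at the outset is that sliding a single token along a sequence of edges alters only that token's relative order against the others; the relative order of a pair $\{r,s\}$ can change only in a move that swaps $r$ directly with $s$.

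The first ingredient is a \emph{path lemma}: using only moves along the path edges $\{i,i+1\}$, two configurations are connected whenever they induce the same relative order on every pair of $M$. This is a constrained selection sort — to bring the target occupant of position $1$ there I slide it leftward, and since it already agrees with the target on all relative orders its stranger lies to its right throughout and never blocks it; I then recurse on positions $2,\dots,n$ (one may alternatively read this off Theorem \ref{path}). In particular, within a single relative-order class every configuration is reachable, so the only genuine obstruction to full connectedness is the ability to change the relative order of the pairs in $M$.

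The second ingredient is a \emph{triangle gadget}. On the three mutually adjacent positions $1,2,3$ the occupants contain at most one stranger pair, so at least two of the three pairs are friends; the two corresponding transpositions are always legal (the positions are pairwise adjacent) and any two distinct transpositions generate $S_3$. Hence on the triangle I can realize \emph{every} permutation of its three occupants using friend-swaps only, including the transposition of a stranger pair via a common friend as pivot. To flip a single chosen pair $\{p,q\}\in M$ on which the source and target disagree, I slide $p$ and $q$ onto two of the three triangle positions — possible because the only token neither can pass on the path is the other, so the one nearer the triangle goes on first and the other follows — leaving an arbitrary third person on the remaining triangle position, who is automatically a friend of both $p$ and $q$. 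Applying the gadget transposes $p$ and $q$ and returns the pivot. By the bookkeeping fact, the only pair of $M$ whose relative order is altered by this entire maneuver is $\{p,q\}$: every crossing performed pits $p$, $q$, or the pivot against a \emph{friend}, because $p$ and $q$ are each other's unique strangers, so no other pair of $M$ is ever crossed.

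Combining the two ingredients finishes the argument. Given any source $\sigma$ and target $\tau$, I run the flip once for each pair of $M$ on which they currently disagree; since each flip toggles exactly one relative order and fixes all others, after finitely many flips the current configuration agrees with $\tau$ on every pair of $M$, whereupon the path lemma connects it to $\tau$. As $\sigma,\tau$ were arbitrary, $\FS(\sf{Lollipop}_{n-3,3},Y)$ is connected. The step I expect to demand the most care is the clean execution of the flip: one must route $p$, $q$ and the pivot through the triangle while verifying that \emph{no} move ever swaps two tokens forming a stranger pair other than $\{p,q\}$ itself. This is exactly where ``each person has at most one stranger'' (i.e.\ $\delta(Y)\ge n-2$ with $k=3$) is used, and it is also why the same scheme must break once $k\ge 4$ permits a vertex of $\overline{Y}$ of degree $\ge 2$.
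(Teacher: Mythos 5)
Your proposal is correct in substance, but it takes a genuinely different route from the paper. The paper never proves Theorem \ref{L1} on its own: it obtains it as the $k=3$ case of Theorem \ref{main}, whose sufficiency half (Proposition \ref{p1}) is an induction on $n$ driven by Lemma \ref{any} of Defant--Dong--Lee--Wei (first move a prescribed person $y_0$ onto position $1$, then recurse on the copy of $\mathsf{Lollipop}_{n-1-k,k}$ induced on positions $2,\dots,n$), with Wilson's theorem (Theorem \ref{complete}) as the base case $n=k$; the necessity half (Proposition \ref{p2}) is an explicit invariant separating ``special'' from ``non-special'' bijections. You instead exploit what is particular to $k=3$: the complement $\overline{Y}$ is a matching, path moves preserve the relative order of every stranger pair, a selection-sort argument connects any two configurations with identical stranger-pair orders, and a triangle gadget flips one chosen stranger pair at a time. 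Your argument is more elementary and self-contained (necessity is just the cited Theorem \ref{t1}; sufficiency needs no external connectivity lemma), and it exposes exactly where $\delta(Y)\ge n-2$ enters; the price is that, as you note yourself, it cannot extend to $k\ge 4$, whereas the paper's induction is uniform in $k$.

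One caveat you should repair. Your ``bookkeeping fact'' as literally stated --- the relative order of a pair $\{r,s\}$ can change only in a move swapping $r$ and $s$ directly --- is false on the lollipop, and indeed is contradicted by your own flip: $p$ and $q$ are strangers, so they never swap directly, yet their relative order does change. The fact is true only for moves along the path edges $\{i,i+1\}$; a swap across the chord $\{1,3\}$ additionally flips the orders of each swapped token against the occupant of position $2$. The repair is what your gadget analysis does implicitly: chord moves occur only while positions $1,2,3$ are occupied by $p$, $q$ and the pivot, so every relative order altered by such a move is a pair inside $\{p,q,\mathrm{pivot}\}$, and the only such pair lying in $M$ is $\{p,q\}$ (the pivot is a friend of both, and its own stranger, if any, stays at a position $\ge 4$ throughout). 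Similarly, the phrase ``every crossing pits $p$, $q$, or the pivot against a friend'' is not right for the one crossing that matters --- $p$ against $q$ --- and should be replaced by this split accounting: path slides change only directly swapped (hence friend) pairs, triangle moves change only pairs among the three triangle occupants. With the bookkeeping lemma restated in that form, your proof is complete.
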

\begin{theorem}\cite{DK}\label{L2}
Let $Y$ be a graph on $n$ vertices. Then $\FS(\sf{Lollipop}_{n-5,5},Y)$ is connected if $\delta(Y)\ge n-3$,  and $\FS(\sf{Lollipop}_{n-5,5},Y)$ is disconnected if $\delta(Y)\le n-5$.
\end{theorem}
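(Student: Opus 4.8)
The plan is to treat the two implications separately. The disconnectedness claim is immediate from Theorem~\ref{t1}: connectedness of $\FS(\sf{Lollipop}_{n-5,5},Y)$ forces $\delta(Y)\ge n-4$, so $\delta(Y)\le n-5$ makes the graph disconnected. All the work lies in sufficiency, for which I would argue by induction on $n$ that $\delta(Y)\ge n-3$ implies $\FS(\sf{Lollipop}_{n-5,5},Y)$ is connected. The structural fact I would lean on throughout is that $\delta(Y)\ge n-3$ means every person has at most two non-neighbours in $Y$, and that this is inherited both by $Y-w$ and by every induced subgraph $Y[S]$ (each vertex of $Y[S]$ still has at most two non-neighbours, so $\delta(Y[S])\ge |S|-3$). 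Thus the hypothesis survives deleting a person and restricting to a sub-lollipop, which is exactly what lets the induction close.

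For the base case $n=5$ the lollipop is $\sf{Complete}_5$, and $\delta(Y)\ge 2$ on five vertices forces $Y$ to be connected (a component of minimum degree $2$ has at least three vertices, and two such components need six vertices), so Theorem~\ref{complete} applies. For the inductive step, label the clique vertices $v_1,\dots,v_5$ with $v_1$ joined to the tail $u_1,\dots,u_{n-5}$, and write $L'=L-u_{n-5}=\sf{Lollipop}_{(n-1)-5,5}$. I would fix one person $w$ and establish two facts. First, the configurations placing $w$ on $u_{n-5}$ are mutually reachable: they correspond to bijections onto $L'$ using $V(Y)\setminus\{w\}$, moves among them are exactly the edges of $\FS(L',Y-w)$, and $\delta(Y-w)\ge(n-1)-3$, so this is the inductive hypothesis. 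Second, from an arbitrary configuration one can move $w$ onto $u_{n-5}$. Granting both, any two configurations can be driven into the connected class ``$w$ on $u_{n-5}$'', yielding connectedness.

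The heart is the second fact. The main tool is that $\FS(L',Y[S'])$ is connected for the current set $S'$ of $n-1$ people occupying $L'$ (again the inductive hypothesis), so while holding the token on $u_{n-5}$ fixed I may freely rearrange all tokens on $L'$ and, in particular, place any chosen person on $u_{n-6}$, the neighbour of $u_{n-5}$. Let $z_0$ be the person on $u_{n-5}$; if $z_0=w$ there is nothing to do. If $w\sim z_0$ in $Y$, I place $w$ on $u_{n-6}$ and swap across the edge $u_{n-6}u_{n-5}$. If $w\not\sim z_0$, I first choose a common neighbour $c$ of $w$ and $z_0$ — one exists since $w$ and $z_0$ jointly exclude at most four people, leaving at least $n-4\ge 1$ candidates — bring $c$ to $u_{n-6}$ and swap it onto $u_{n-5}$ (legal as $c\sim z_0$), then bring $w$ to $u_{n-6}$ and swap it onto $u_{n-5}$ (legal as $w\sim c$).

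The delicate point, and where I expect the most care, is precisely the end of the tail: the token on $u_{n-5}$ can only be altered through the single edge $u_{n-6}u_{n-5}$, so the whole maneuver rests on having enough adjacencies to route $w$ across that bottleneck, which is exactly what the bound $\delta(Y)\ge n-3$ (at most two non-neighbours) provides. This also explains the gap at $\delta(Y)=n-4$: with three non-neighbours per vertex a five-element induced subgraph of $Y$ may be disconnected, so $\sf{Complete}_5$ can fail to act as a free sorting station and both the base case and the rearrangement engine break down. Finally I would check the small-tail bookkeeping ($n=6$, where $L'=\sf{Complete}_5$ and ``$u_{n-6}$'' is read as the joint vertex $v_1$) and confirm that every invocation of the inductive hypothesis is on a genuinely smaller lollipop still meeting the degree bound.
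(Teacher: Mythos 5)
Your proof is correct. A remark on context first: the paper does not reprove this statement (it is quoted from Defant--Kravitz); in the paper's framework it falls out of Theorem~\ref{main}, since $\delta(Y)\ge n-3$ forces every $5$-vertex induced subgraph of $Y$ to have minimum degree at least $2$, hence to be connected (so Proposition~\ref{p1} applies), while $\delta(Y)\le n-5$ yields a vertex plus four of its non-neighbours inducing a disconnected $5$-vertex subgraph (so Proposition~\ref{p2} applies); your disconnectedness half, via the contrapositive of Theorem~\ref{t1}, is equally valid. For the connectedness half, your induction has the same skeleton as the paper's proof of Proposition~\ref{p1}: fix a person, drive it to the free end of the tail, then delete that position and that person and invoke the induction hypothesis on $\sf{Lollipop}_{(n-1)-5,5}$. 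Where you genuinely diverge is the routing step. The paper outsources the claim ``from any configuration one can reach a configuration placing a prescribed person on a prescribed vertex'' to Lemma~\ref{any} of Defant--Dong--Lee--Wei, which needs only that every $5$-vertex induced subgraph of $Y$ be connected; you instead prove the routing by hand, rearranging freely on $L'$ via the induction hypothesis and crossing the bottleneck edge either directly (when $w\sim z_0$) or through a common neighbour $c$ of $w$ and $z_0$, whose existence is exactly where $\delta(Y)\ge n-3$ enters (each of $w,z_0$ has at most one non-neighbour besides the other, leaving at least $n-4\ge 2$ common neighbours in the inductive step). Your version is self-contained and elementary, which is a real merit. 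What it gives up is generality: the common-neighbour count genuinely requires the degree bound, and under the weaker hypothesis of Theorem~\ref{main} (every $k$-vertex induced subgraph connected, equivalently $(n-k+1)$-connectedness) two non-adjacent vertices need not have any common neighbour --- e.g.\ antipodal vertices of $Y=\sf{Cycle}_6$ with $k=5$ --- so your routing argument would not establish the paper's full theorem, while Lemma~\ref{any} does.
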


Obviously,  the connectedness of $\FS(\sf{Lollipop}_{n-3,3},Y)$ is completely determined by Theorem \ref{L1}. However,  there is a ``gap''  between the lower and upper bounds of $\delta(Y)$ that guarantee $\FS(\sf{Lollipop}_{n-5,5},Y)$ to be connected or not by Theorem \ref{L2}. So, Defant and Kravitz \cite{DK}  raised a question: Characterize the graphs $Y$ with $\delta(Y)=n-4$ for which  $\FS(\mathsf{Lollipop}_{n-5,5},Y)$ is connected. 
\vskip 2mm
In this paper, we give a sufficient and necessary condition for $\FS(\mathsf{Lollipop}_{n-k,k},Y)$ to be connected for all $2\leq k\leq n$, and the main result is as below.

 \begin{theorem}\label{main}  Let $2\le k \le n$ be integers and $Y$ be a graph on $n$ vertices. Then the graph $\FS(\sf{Lollipop}_{n-k,k},Y)$ is connected if and only if every $k$-vertex induced subgraph of $Y$ is connected, which is equivalent to $Y$ is $(n-k+1)$-connected.
\end{theorem}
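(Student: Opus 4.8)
The plan is to prove the two implications separately, after first recording the elementary equivalence between the two stated conditions on $Y$. For the equivalence, observe that a $k$-vertex induced subgraph of $Y$ is precisely what remains after deleting some $n-k$ vertices, so if $Y$ is $(n-k+1)$-connected then every such subgraph is connected. Conversely, if deleting fewer than $n-k$ vertices disconnected $Y$, then picking one vertex from each of two resulting components and extending to a $k$-set (possible since more than $k$ vertices survive) yields a disconnected $k$-vertex induced subgraph; hence the $k$-subset condition forces $(n-k+1)$-connectivity. I would therefore work with the $k$-subset formulation throughout.

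For \emph{sufficiency} I would induct on the tail length $n-k$. The base case $n=k$ is exactly Theorem \ref{complete}, since $\sf{Lollipop}_{0,n}=\sf{Complete}_n$ and ``the unique $n$-subset is connected'' means $Y$ is connected. For the inductive step, let $w$ be the unique degree-one vertex of $\sf{Lollipop}_{n-k,k}$ and $w'$ its neighbour, and partition the configurations according to the person $y^{*}$ occupying $w$. No legal move at $w$ can occur without changing the occupant of $w$, so the configurations in a single class span a copy of $\FS(\sf{Lollipop}_{(n-1)-k,k},Y-y^{*})$; since every $k$-subset of $Y-y^{*}$ is still connected, the induction hypothesis makes each class internally connected. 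To link a class $y^{*}$ to a class $z$ with $\{y^{*},z\}\in E(Y)$, I use internal connectedness of the class $y^{*}$ to reach a configuration in which $z$ sits on $w'$, and then swap across the edge $ww'$. Because $Y$ is connected, these links make the quotient on classes connected, so $\FS(\sf{Lollipop}_{n-k,k},Y)$ is connected.

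For \emph{necessity} I would prove the contrapositive: given a $k$-set $S$ with $Y|_{S}$ disconnected, split $S=A\sqcup B$ with no edges between $A$ and $B$, and exhibit an invariant separating two configurations. The guiding observation is that an $A$-person and a $B$-person are never friends, hence can never be swapped directly; consequently along the pendant path their relative order is frozen, and the only place they can move past one another is inside the clique, whose sole gateway to the path is the attachment vertex. Reading the word of $A/B$-labels along the tail, one checks that path swaps leave this word unchanged, while a swap through the gateway only inserts or deletes a label at the clique-end, so the tail behaves like a stack whose top is the attachment vertex. The plan is to convert this stack discipline, together with the fact that the clique has only $k$ seats and so cannot simultaneously hold all of $A\cup B$ and a helper vertex needed to interchange $A$- and $B$-people, into a genuine invariant distinguishing two explicit placements of $S$.

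I expect necessity to be the main obstacle. The sufficiency induction is essentially bookkeeping once the leaf-freezing reduction is set up, but for necessity the delicate point is that the helper vertices in $R=V(Y)\setminus S$ may be adjacent to both $A$ and $B$ and could in principle ferry $A$- and $B$-people past each other inside the clique. Making the invariant precise requires showing that the capacity bound on the clique blocks this, which is exactly where the single cut-vertex structure of the lollipop must be exploited in full.
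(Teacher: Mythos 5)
Your \emph{sufficiency} argument is complete and correct, and it takes a mildly different route from the paper's Proposition \ref{p1}. Both are inductions with base case $n=k$ handled by Theorem \ref{complete}, but where you link the classes $\{\sigma:\sigma(w)=y^{*}\}$ directly (walk inside the class, which is an induced copy of $\FS(\sf{Lollipop}_{(n-1)-k,k},Y-y^{*})$, to put $z$ on $w'$, then swap across $ww'$, using that the $k$-subset hypothesis forces $Y$ itself to be connected), the paper instead invokes a lemma of Defant, Dong, Lee and Wei (Lemma \ref{any}) to drag a prescribed person onto the leaf before applying the induction hypothesis. Your version is self-contained and does not lean on \cite{DDLW}; the paper's is shorter. (Two small slips in your equivalence paragraph: a cut set witnessing failure of $(n-k+1)$-connectivity has size \emph{at most} $n-k$, not ``fewer than $n-k$,'' and what survives has \emph{at least} $k$ vertices, not ``more than $k$''; neither affects the argument.)

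The \emph{necessity} half, by contrast, is not a proof as written: you set out the right mechanism (the $A/B$-word along the tail changes only by push/pop at the attachment vertex; the clique has too few seats) but then explicitly defer ``converting this stack discipline into a genuine invariant,'' and that conversion is exactly where the content lies. The missing sentence is the definition of the invariant, and your own ingredients force it: since the off-tail positions number only $k-1$ while $|A\cup B|=k$, \emph{every} configuration has at least one member of $A\cup B$ on a tail position; call a configuration special if the member of $A\cup B$ occupying the tail position nearest the leaf lies in $A$. Your stack discipline then gives invariance immediately: tail swaps and swaps inside the clique never change the word (an $A$-person and a $B$-person are never friends, so adjacent letters that swap are equal); a swap at the attachment vertex pushes, pops, or retypes only the \emph{last} letter; and the word can never be popped empty, because the resulting configuration would violate the pigeonhole count above. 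Hence the \emph{first} letter never changes --- in particular, the ``ferrying'' by helpers inside the clique that you worry about is harmless, since it never touches the first letter. A configuration with an $A$-person on the leaf is special and one with a $B$-person on the leaf is not, so $\FS(\sf{Lollipop}_{n-k,k},Y)$ is disconnected. This is precisely the paper's Proposition \ref{p2}: its ``special'' configurations with a ``timid'' vertex are this same invariant, and its closing contradiction --- the $k$ members of $A\cup B$ plus one helper forced into the $k$ clique positions --- is the same count you gesture at. So your plan points at the correct invariant, but the necessity direction stops one definition (and its verification) short of a proof.
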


One can see that Theorem  \ref{main} strengthens Theorem \ref{t1} and extends Theorems \ref{path} and \ref{complete}. In addition, by Theorem  \ref{main}, we can easily deduce the following.

\begin{corollary}\label{co}
 Let $Y$ be a  graph on $n$ vertices with $\delta(Y)=n-4$.  Then the graph $\FS(\mathsf{Lollipop}_{n-5,5},Y)$ is connected if and only if  $Y$ does not contain any induced subgraph isomorphic to $\sf{Complete}_3\cup \sf{Path}_2$ or $\sf{Path}_3\cup \sf{Path}_2$.
\end{corollary}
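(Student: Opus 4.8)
The plan is to deduce the corollary directly from Theorem~\ref{main} specialized to $k=5$. By that theorem, $\FS(\sf{Lollipop}_{n-5,5},Y)$ is connected if and only if every $5$-vertex induced subgraph of $Y$ is connected; equivalently, it is \emph{disconnected} precisely when $Y$ admits some $5$-subset $S\subseteq V(Y)$ with $Y|_S$ disconnected. Thus the entire task reduces to identifying exactly which disconnected graphs on $5$ vertices can occur as $Y|_S$ under the hypothesis $\delta(Y)=n-4$, and checking that these are exactly $\sf{Complete}_3\cup\sf{Path}_2$ and $\sf{Path}_3\cup\sf{Path}_2$.

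The key step is to use the degree hypothesis to rule out isolated vertices inside any $5$-set. Since $\delta(Y)=n-4$, each vertex $v\in V(Y)$ is non-adjacent to at most $n-1-(n-4)=3$ other vertices of $Y$. Hence, for any $5$-subset $S$ containing $v$, among the four other vertices of $S$ at most three can be non-neighbors of $v$, so $v$ has at least one neighbor inside $S$. Consequently $\delta(Y|_S)\ge 1$ for every $5$-subset $S$; that is, no induced subgraph on $5$ vertices has an isolated vertex.

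It remains to classify the disconnected graphs on $5$ vertices with no isolated vertex. Such a graph has every component of size at least $2$, and the only partition of $5$ into parts each at least $2$ is $5=2+3$. The size-$2$ component must be $\sf{Path}_2$ (a single edge), and the size-$3$ component, being connected, must be either $\sf{Path}_3$ or $\sf{Complete}_3$. Therefore any disconnected $Y|_S$ with $|S|=5$ is isomorphic to either $\sf{Complete}_3\cup\sf{Path}_2$ or $\sf{Path}_3\cup\sf{Path}_2$; conversely each of these is itself a disconnected $5$-vertex graph, so its presence as an induced subgraph of $Y$ forces some $5$-vertex induced subgraph to be disconnected. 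Combining this dichotomy with the reduction of the first paragraph yields the stated equivalence. I do not anticipate a genuine obstacle here: the only point requiring care is to confirm that the minimum-degree bound forbids \emph{isolated} vertices yet does not by itself preclude the $2+3$ split, so that both forbidden subgraphs really can arise and must therefore both be excluded, rather than just one of them.
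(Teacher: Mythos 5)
Your proof is correct and is precisely the deduction the paper intends (the paper leaves this corollary as an easy consequence of Theorem~\ref{main}): specialize to $k=5$, use $\delta(Y)=n-4$ to show no $5$-vertex induced subgraph has an isolated vertex, and classify the disconnected $5$-vertex graphs with minimum degree at least $1$ as exactly $\sf{Complete}_3\cup\sf{Path}_2$ and $\sf{Path}_3\cup\sf{Path}_2$. No gaps; the partition argument $5=2+3$ and the identification of the possible components are exactly what is needed.
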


It is clear that Corollary \ref{co} characterizes the graphs $Y$ with $\delta(Y)=n-4$ for which  $\FS(\mathsf{Lollipop}_{n-5,5},Y)$ is connected. On the other hand, it is not difficult to check that there are many graphs $Y$ on $n$ vertices with $\delta(Y)=n-4$ which do/don't contain $\sf{Complete}_3\cup \sf{Path}_2$ or $\sf{Path}_3\cup \sf{Path}_2$ as an induced subgraph.

%
%

\section{Proof of Theorem \ref{main}}

In order to prove Theorem \ref{main}, we assume that the $\sf{Path}_{n-k+1}$ in $\mathsf{Lollipop}_{n-k,k}$ has vertex set $[n-k+1]$ and edge set 
$\{ \{1,2\}, \{2,3\},\dots, \{n-k,n-k+1\} \}$ and the $\sf{Complete}_k$ is on the set  $\{n-k+1,\dots,n\}$, where $2\le k \le n$.

We divide the proof of Theorem \ref{main} into two parts: connected and disconnected.


\subsection{Connectedness}
\begin{proposition}\label{p1}
Let $2\le k \le n$ be integers and $Y$ be a graph on $n$ vertices. If every $k$-vertices induced subgraph of $Y$ is connected, then $\FS(\sf{Lollipop}_{n-k,k},Y)$ is connected.
\end{proposition}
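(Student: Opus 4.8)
The plan is to prove Proposition~\ref{p1} by induction on $k$, using the two known characterizations (Theorems~\ref{path} and~\ref{complete}) as base cases and reducing the lollipop's clique size by one at each step. The key structural observation is that $\mathsf{Lollipop}_{n-k,k}$ contains $\mathsf{Lollipop}_{n-(k-1),k-1}$ as a subgraph in a natural way: if we fix one vertex of the $\sf{Complete}_k$ (say the apex vertex $n$, or a non-attachment vertex) and restrict attention to configurations in which a prescribed person occupies the corresponding position, the remaining swaps take place on a smaller lollipop. The strategy is therefore to show that any two bijections $\sigma, \sigma'$ can each be moved, via legal swaps, into a common ``normalized'' form, and then connect the normalized forms to each other.

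I would organize the argument around the attachment vertex $v=n-k+1$ where the path meets the clique. The plan has three phases. First, given an arbitrary configuration $\sigma$, I would use swaps entirely inside the clique $\{n-k+1,\dots,n\}$ to rearrange the $k$ people currently occupying clique positions into a canonical order; here the hypothesis that every $k$-vertex induced subgraph of $Y$ is connected is exactly what guarantees (via Theorem~\ref{complete} applied to $\FS(\sf{Complete}_k, Y|_S)$ for the relevant $k$-set $S$) that the clique acts transitively on its occupants. Second, I would move a designated ``target'' person into a fixed clique position — say position $n$ — by routing people along the path and into the clique; the point is that with a full clique available as a ``rotor,'' any person sitting anywhere can be cycled into any desired clique slot. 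Third, once a person is parked at position $n$, the remaining positions form $\mathsf{Lollipop}_{n-(k-1),k-1}$ on $n-1$ vertices, and I would invoke the induction hypothesis after verifying that the relevant smaller induced-subgraph-connectivity condition on $Y$ (restricted to the $n-1$ people not parked) is inherited from the hypothesis on $Y$.

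The main obstacle I anticipate is the inductive bookkeeping in phase three: when I delete position $n$ and its occupant to expose a $(k-1)$-lollipop, I must check that \emph{every} $(k-1)$-vertex induced subgraph of the reduced people-graph $Y' = Y - \{\text{parked person}\}$ is connected, so that the induction hypothesis genuinely applies. This is not automatic from ``every $k$-vertex induced subgraph of $Y$ is connected'' applied blindly, because a $(k-1)$-set in $Y'$ need not extend to a \emph{connected} $k$-set inside $Y$ in an obvious way; I expect to need a short lemma showing that the hypothesis ``every $k$-vertex induced subgraph is connected'' is equivalent to $(n-k+1)$-connectivity of $Y$ (the equivalence already asserted in Theorem~\ref{main}), and that this connectivity property passes cleanly to vertex-deleted subgraphs with the index shifted appropriately. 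Working with the $(n-k+1)$-connectivity formulation rather than the induced-subgraph formulation should make this inheritance transparent, since deleting one vertex drops connectivity by at most one while also dropping both $n$ and the required threshold by one.

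The remaining technical care is in phase two, where I must be sure that the sequence of swaps used to route a person into the clique never gets stuck: each swap along the path requires the two people involved to be adjacent in $Y$, and each swap inside the clique requires the same. I would handle this by always keeping the clique fully ``mixing'' — the connectivity of the $k$-vertex induced subgraphs means that inside the clique I can realize any permutation of its occupants, so I can always arrange an adjacency in $Y$ to license the next path swap by first permuting the clique occupant adjacent to the path endpoint into a compatible person. This interplay — using clique-internal freedom to unblock path swaps — is the crux that makes the lollipop behave so much better than a bare path, and it is precisely where the hypothesis is used most heavily.
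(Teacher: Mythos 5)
Your plan founders at exactly the point you flagged as the main obstacle, and the fix you propose there rests on an arithmetic slip. You induct on $k$ by parking one person $y_0$ at a clique position, exposing $\sf{Lollipop}_{n-k,k-1}$ on the remaining $n-1$ positions; to invoke the induction hypothesis you need every $(k-1)$-vertex induced subgraph of $Y'=Y-y_0$ to be connected, i.e.\ $Y'$ must be $\bigl((n-1)-(k-1)+1\bigr)$-connected $=(n-k+1)$-connected. That is the \emph{same} threshold as the hypothesis on $Y$, not one lower: since $n$ and $k$ drop together, the quantity $n-k+1$ is invariant, whereas deleting a vertex can drop the connectivity of $Y$ from $n-k+1$ to $n-k$. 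So the inheritance genuinely fails, and no clever choice of $y_0$ saves it. Concretely, take $Y=\sf{Cycle}_n$ and $k=n-1$: every $(n-1)$-vertex induced subgraph of $Y$ is a path, hence connected, so the proposition applies (and indeed $\FS(\sf{Lollipop}_{1,n-1},Y)$ is connected); but for \emph{every} vertex $y_0$, the graph $Y-y_0$ is a path, which has disconnected $(n-2)$-vertex induced subgraphs, so your induction hypothesis is unavailable no matter which person you park.

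The paper avoids this by inducting on $n$ with $k$ held fixed, peeling off the \emph{path end} (position $1$) instead of a clique vertex: it first invokes a lemma of Defant, Dong, Lee and Wei to move any configuration, within its component, to one placing a prescribed person $y_0$ at position $1$; the remaining positions then form $\sf{Lollipop}_{n-1-k,k}$, and the condition needed on $Y-y_0$ is again ``every $k$-vertex induced subgraph is connected,'' which is trivially inherited, since any $k$-subset of $V(Y)\setminus\{y_0\}$ is a $k$-subset of $V(Y)$. In connectivity language the thresholds now align: the requirement drops from $(n-k+1)$-connectivity to $(n-k)$-connectivity, exactly matching the loss of at most one from deleting a vertex. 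If you redirect your phases so that the designated person is parked at position $1$ rather than position $n$ (your clique-as-rotor routing, made rigorous, is essentially what the cited lemma provides), your argument becomes the paper's proof.
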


We need the following result due to Defant, Dong, Lee and Wei \cite{DDLW}.
\begin{lemma}\cite{DDLW} \label{any}
Let $X$ and $Y$ be connected  graphs on $n$ vertices with $\Delta(X)=k\geq 2$. Suppose every  induced subgraph of $Y$ with $k$ vertices is connected. Let $\sigma$ be a vertex of $\FS(X,Y)$, and fix $x\in V(X)$ and $y\in V(Y)$. Then there exists a vertex $\sigma'$ in the same component of $\FS(X,Y)$ as $\sigma$ such that $\sigma'(x) = y$. 
\end{lemma}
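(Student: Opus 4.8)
The plan is to read the statement as a reachability (transitivity) claim: inside the connected component of $\sigma$ in $\FS(X,Y)$ we must exhibit a configuration that seats person $y$ on position $x$. I would first record two consequences of the hypothesis that every $k$-vertex induced subgraph of $Y$ is connected. First, the same holds for every induced subgraph on $j\ge k$ vertices: a disconnected such subgraph splits into parts, and while $j>k$ some part has at least two vertices (an all-singleton split is an independent set, from which any vertex may be deleted), so we may delete vertices down to a disconnected $k$-vertex induced subgraph, which is impossible once $k\ge 2$; in particular $Y$ itself is connected. Second, $\delta(Y)\ge n-k+1$, since a person with at least $k-1$ non-friends, together with $k-1$ of them, would form a $k$-vertex induced subgraph in which that person is isolated. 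Thus each person is a friend of all but at most $k-2$ of the others, and this abundance of friendships is the quantitative engine behind every later swap.

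The technical heart is a local maneuver at a vertex $v$ of $X$ with $\deg_X(v)=\Delta(X)=k$; write $N(v)=\{u_1,\dots,u_k\}$ and let $p_0,p_1,\dots,p_k$ be the people occupying $v,u_1,\dots,u_k$. The swaps available here are exactly those of $\FS(\sf{Star}_{k+1},Z)$, where $Z:=Y|_{\{p_0,\dots,p_k\}}$ is the induced subgraph on these $k+1$ people; by the first reduction $Z$ is connected, and in fact $2$-connected, since deleting any one of its vertices leaves a connected $k$-vertex subgraph. I would prove the following \emph{center-walk} claim: any one of these $k+1$ people can be brought to the hub $v$ by star-swaps alone. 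Concretely, fix a path $p_0=q_0,q_1,\dots,q_m=q$ in $Z$ from the current hub-occupant to the target person $q$; swapping the hub-occupant successively with $q_1,q_2,\dots,q_m$ is legal at every step, because consecutive $q$'s are friends and each $q_{j+1}$ — being distinct from all previously moved people — still sits on its original leaf when its turn arrives, so the hub-occupant simply walks the path until $q$ reaches $v$. More refined versions of the same walk should let me realize the local permutations of $\{p_0,\dots,p_k\}$ needed to seat a chosen person on a chosen \emph{leaf}; making this precise is delicate and is essentially the known analysis of $\FS(\sf{Star}_{k+1},Z)$ for $2$-connected $Z$.

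With this engine in place, the global argument pushes $y$ toward $x$ along a path of $X$, using the hub $v$ as a mixing station. Since $X$ is connected I fix a path from $y$'s current position to $x$; whenever $y$ sits at a position $a$ and wishes to advance to an adjacent $b$ whose occupant is one of $y$'s at most $k-2$ non-friends, I must first evacuate that occupant and circulate a friend of $y$ onto $b$ — such friends exist because $\delta(Y)\ge n-k+1$, and the center-walk maneuver at $v$ provides the room to rearrange the blocking people. Iterating advances $y$ one step at a time until it reaches $x$. The clean way to organize this is an induction in which one first brings a suitable person to $v$, fixes it, and recurses on the remaining board. The main obstacle is precisely the non-disturbance problem: rearranging the people around a blocked edge without undoing the ground $y$ has already gained, together with the fact that a naive vertex-deletion induction fails when $X$ is star-like near $v$ (deleting a leaf lowers $\Delta(X)$ and so destroys the $k$-subset hypothesis). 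Resolving this uniformly — by leaning on the $2$-connectivity of the local subgraph $Z$ and the global bound $\delta(Y)\ge n-k+1$ rather than on vertex deletion — is the step I expect to demand the most care.
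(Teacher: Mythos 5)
You should first be aware that the paper you are working against does not prove this statement at all: it is Lemma 2.2, imported verbatim from the cited work of Defant, Dong, Lee and Wei and used as a black box inside the proof of Proposition 2.1. So there is no in-paper proof to compare routes with, and your proposal has to stand on its own. Its preliminary reductions do stand: the upward monotonicity (every induced subgraph of $Y$ on $j\ge k$ vertices is connected, hence $Y$ itself is connected), the bound $\delta(Y)\ge n-k+1$, the $2$-connectivity of the local graph $Z$ on the $k+1$ people around a degree-$k$ hub $v$, and the center-walk claim are all correct and correctly argued.

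The genuine gap is the global step, and it is not a matter of ``care'' to be supplied later --- it is the entire content of the lemma. When $y$ sits at $a$ and the occupant of the adjacent position $b$ is a non-friend of $y$, you propose to ``evacuate that occupant and circulate a friend of $y$ onto $b$.'' But circulating a chosen person onto a chosen position of $X$ by swaps in $\FS(X,Y)$ is exactly the statement being proved --- and your version is strictly harder, since you additionally forbid disturbing $y$ at $a$. All of the machinery you actually construct (the center-walk, the $2$-connectivity of $Z$, the Wilson-type analysis of $\FS(\mathsf{Star}_{k+1},Z)$) lives inside the closed neighborhood of the single max-degree vertex $v$, whereas $b$ may be arbitrarily far from $v$ in $X$; away from $v$ the degrees of $X$ can be $1$ or $2$, and induced subgraphs of $Y$ on fewer than $k$ vertices need not be connected, so nothing you have established lets a person travel between $N_X[v]$ and $b$. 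The argument is therefore circular precisely where it is nontrivial. (A secondary caution: your appeal to $2$-connectivity of $Z$ for realizing local permutations is delicate for exactly the reason Wilson's theorem has exceptions --- a cycle, a bipartite graph, and the $\theta_0$ graph are all $2$-connected, and a cycle is consistent with your hypotheses on $Y$ --- so only the weak ``bring a chosen person to the hub'' conclusion is safely available; but even granting full local mixing, the non-local obstruction above remains.) To repair the argument you would need an induction whose inductive step itself produces the blocked-edge maneuver --- this is what the cited proof of Defant, Dong, Lee and Wei does --- rather than a hub-based mixing argument.
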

\begin{proof}[Proof of Proposition~\ref{p1}]
 We  proceed by induction on $n$. Note that $n\geq k$. If $n=k$, then Proposition \ref{p1} holds by Theorem \ref{complete}. Assume that Proposition \ref{p1} is true for $n-1$. We now show it also holds for $n$. 
 
 Fix an arbitrary vertex $y_0\in V(Y)$ and a vertex $\sigma_0$ of $\FS(\sf{Lollipop}_{n-k,k},Y)$ satisfying  $\sigma_0(1)=y_0$. We claim that for any other vertex $\sigma$ of  $\FS(\sf{Lollipop}_{n-k,k},Y)$, $\sigma$ is in the same component of $\FS(\sf{Lollipop}_{n-k,k},Y)$ as $\sigma_0$, which implies that $\FS(\sf{Lollipop}_{n-k,k},Y)$ is connected.

 Let $\sigma$  any vertex of  $\FS(\sf{Lollipop}_{n-k,k},Y)$ other than $\sigma_0$.
 Apply Lemma \ref{any} on $\FS(X,Y)$ with $X=\sf{Lollipop}_{n-k,k}$, $x=1$, $y=y_0$  and $\sigma$, then there exists a vertex $\sigma'$ in the same component of $\FS(\sf{Lollipop}_{n-k,k},Y)$ as $\sigma$ such that $\sigma'(1)=y_0=\sigma_0(1)$. The graph $\sf{Lollipop}_{n-k,k}|_{\{2,3,\dots,n\}}$ is isomorphic to $\sf{Lollipop}_{n-1-k,k}$ and the graph $Y|_{V(Y)\backslash y_0}$ satisfies the property that every $k$-vertices induced subgraph of $Y|_{V(Y)\backslash y_0}$ is connected. By the induction hypothesis, $\FS(\sf{Lollipop}_{n-k,k}|_{\{2,3,\dots,n\}},Y|_{V(Y)\backslash y_0})$ is connected, which guarantees that $\sigma'$ and $\sigma_0$ are in the same component of $\FS(\sf{Lollipop}_{n-k,k},Y)$, i.e., $\sigma$ and  $\sigma_0$ are in the same component of $\FS(\sf{Lollipop}_{n-k,k},Y)$.
\end{proof}

\subsection{Disconnectedness}

\begin{proposition}\label{p2}
 Let $Y$ be a graph on $n$ vertices. If there exists a disconnected induced subgraph $Y_0$ of $Y$ with $k$ vertices, then $\FS(\sf{Lollipop}_{n-k,k},Y)$ is disconnected.
\end{proposition}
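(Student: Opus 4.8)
The plan is to reduce the statement to a purely combinatorial fact about a two–colour sliding puzzle and then to isolate an order invariant. Since $Y_0$ is a disconnected induced subgraph on a $k$-set $W$, I would first fix a partition $W=A\sqcup B$ into two nonempty parts with no edges of $Y$ between $A$ and $B$. Colour a bijection $\sigma$ by declaring a position \emph{red} if it carries a vertex of $A$, \emph{blue} if it carries a vertex of $B$, and \emph{white} otherwise; there are $|A|$ reds, $|B|$ blues and $n-k$ whites. The key observation is that every edge of $\FS(\sf{Lollipop}_{n-k,k},Y)$ corresponds to a swap across an edge of $Y$, and since $A$ and $B$ are non-adjacent in $Y$, no such swap ever exchanges a red with a blue. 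Hence $\FS(\sf{Lollipop}_{n-k,k},Y)$ is a spanning subgraph of the \emph{relaxed} graph $R$ on the same vertex set in which every adjacent swap except red--blue ones is permitted; as $R$ depends only on the partition $A\sqcup B$ and not on the remaining edges of $Y$, it suffices to prove that $R$ is disconnected, for then the components of $\FS$ refine those of $R$.

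Projecting to colours, the dynamics of $R$ is exactly a sliding puzzle on $\sf{Lollipop}_{n-k,k}$ in which the $n-k$ whites play the role of blanks and the $k$ red/blue tokens slide into adjacent blanks, with no red--blue crossing ever available. I would next exhibit two bijections of $\FS$ together with an invariant of $R$ that separates them. The guiding picture is that the handle $1,2,\dots,n-k$ is a dead-end path: along it a red and a blue token can never swap, so their relative order is frozen, whereas the clique $\{n-k+1,\dots,n\}$, being complete, can rearrange whatever sits on it. The invariant should therefore record the cyclic arrangement of the red and blue tokens with the handle serving as a distinguished reference point, capturing the fact that a red token, once separated from the blue tokens by being driven out along the handle to the dead-end $1$, can never be re-interleaved among them. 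Concretely, I would take $\sigma$ with $A\cup B$ sitting on the clique and $\sigma'$ obtained from it by relocating one designated vertex of $A$ to position $1$ with vertices of $B$ filling the neck, and show that the invariant takes different values on $\sigma$ and $\sigma'$.

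The main obstacle is the verification that this order quantity is genuinely invariant under \emph{all} moves of $R$ --- in particular that the extra edges of the clique cannot be used as a shortcut to circumvent the ordering enforced by the handle. The heart of the matter is the following phenomenon, which I would prove as a lemma: the contents of the clique can be permuted arbitrarily only at the cost of ``parking'' at least one token on the handle, since such a permutation requires a blank inside the clique, which in turn forces a coloured token out through the neck. Thus a red token can change its position relative to the blues only while some token is held on the handle, and the dead-end prevents the red token from ever slipping past the blues that block the neck. Making this precise --- choosing the cyclic-order statistic so that it is insensitive to clique rotations yet detects the separation across the handle, and checking its invariance case by case on swaps inside the clique, swaps on the handle, and swaps across the neck --- is the technical core of the proof; once the invariant is in place, the disconnectedness of $R$, and hence of $\FS(\sf{Lollipop}_{n-k,k},Y)$, follows at once.
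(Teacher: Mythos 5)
Your opening reduction is correct and clean: since no edge of $Y$ joins $A$ to $B$, every edge of $\FS(\mathsf{Lollipop}_{n-k,k},Y)$ is an edge of your relaxed graph $R$ (which is itself $\FS(\mathsf{Lollipop}_{n-k,k},Z)$ for $Z$ the complete graph minus all $A$--$B$ edges), so disconnecting $R$ suffices. The problem is that the heart of the proof --- the invariant --- is never produced, and the shape you prescribe for it would not work. Once a single blank sits inside the clique, the contents of the clique can be permuted \emph{arbitrarily} (by Theorem~\ref{complete}, the swap graph on those tokens is connected as soon as a white token is present), so no cyclic or linear arrangement of the $k$ coloured tokens is preserved; and the colour sequence along the handle is not preserved either, since tokens enter and leave it at the junction. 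The statistic that actually survives is much cruder than a cyclic order: it is the colour of the coloured token on the path $\{1,\dots,n-k+1\}$ closest to the dead end. It is well defined because the $k$ coloured tokens cannot all fit into the $k-1$ clique positions other than the junction (your ``blank'' pigeonhole, used once and for all), and it is invariant because a blue can never cross a red on the path, while a red sitting at the junction as the first coloured token can leave the path only by swapping into the clique --- and in that situation the clique is entirely filled with coloured tokens, so its swap partner must be another red and the statistic is unchanged. This is precisely the paper's notion of a \emph{special} configuration with a \emph{timid} vertex, and proving its edge-invariance is exactly the case analysis you deferred as ``the technical core''; nothing in your sketch supplies it.

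Your pair of separating configurations is also flawed as stated. In $\sigma$ all $k$ tokens of $A\cup B$ fill the $k$ clique positions, so the junction carries a coloured token; if that token happens to be \emph{red}, then $\sigma$ lies in the same component of $R$ as your $\sigma'$: walk the junction red down to position $1$ (red--white swaps along the path), which places a blank in the clique, then extract the blues one at a time through the junction onto the neck, and finally fix up the clique and the identity of the designated red using the blank and red--red swaps, all of which are legal moves of $R$. Since $\sigma$ and $\sigma'$ are then joined by a path, no invariant whatsoever can distinguish them. You must insist that the junction token of $\sigma$ lies in $B$ --- equivalently, compare a configuration with an $A$-vertex at position $1$ against one with a $B$-vertex at position $1$, which is how the paper exhibits both a special and a non-special vertex.
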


\begin{proof}
 Let  $V(Y_0)=A\cup B$ such that $A\cap B= \varnothing$ and there are no edges between $A$ and $B$.  Let $X=\sf{Lollipop}_{n-k,k}$. We say a vertex $\sigma$  of $\FS(X,Y)$ is {\it special} if there exists an $a_0\in A$ such that $\sigma^{-1}(a_0)\in [n-k+1]$ and $\sigma^{-1}(B)=\{\sigma^{-1}(y)~|~y\in B\}\cap \{1,2,\dots,\sigma^{-1}(a_0)\}=\varnothing$. Such a vertex $a_0\in A$ is called a {\it timid} vertex for $\sigma$. It is easy to see that there exist both special vertex and non-special vertex in $\FS(X,Y)$ because $\sigma'$ is special if $\sigma'(1)\in A$ and $\sigma'$ is non-special if $\sigma'(1)\in B$. 
 We claim that any special vertex is not adjacent to any non-special vertex, which implies that $\FS(\sf{Lollipop}_{n-k,k},Y)$ is disconnected.

 Suppose to the contrary that a special vertex $\sigma$ is adjacent to a non-special vertex $\tau$, then there must be two adjacent vertices $a,c$ in $V(X)=[n]$ such that $\tau=\sigma \circ (a \ c)$, where $(a \ c)$ denotes the transposition of $a,c$ on $[n]$ that swaps the numbers $a$ and $c$.  If none of $a, c$ is $\sigma^{-1}(a_0)$, then $\tau$ is also special with its timid vertex $a_0$. So we may assume that $a=\sigma^{-1}(a_0)$, which implies $\sigma(c)\notin  B$ since $\sigma(a)$ are adjacent to $\sigma(c)$ in $Y$, but there are no edges between $A$ and $B$. In addition, $\sigma(c)$ does not belong to $A$  and $c$ does not equal to $a-1$ since otherwise $\tau$  will still be special, with the timid vertices $c$ and $a_0$ for $\tau$, respectively. We further claim that $a$ equals to $n-k+1$ and $\sigma^{-1}(A)\cap [n-k]=\varnothing$.

Suppose that $a \neq n-k+1$, that is, $a\in [n-k]$, then $c$ equals to $a+1$.  We have $\sigma^{-1}(B)\cap \{1,2,\dots,\tau^{-1}(a_0) \}=\varnothing$ since $\sigma^{-1}(B)\cap \{1,2, \dots, c\}=\varnothing$, i.e., $a_0$ is the timid vertex for $\tau$. So we conclude that $a=n-k+1$. If there is a $\sigma^{-1}(a_0') \in \sigma^{-1}(A)\cap [n-k]$, then we have $\sigma^{-1}(B)\cap \{1,2,\dots,\tau^{-1}(a_0') \}=\varnothing$ since $\tau^{-1}(a_0')=\sigma^{-1}(a_0')$, and so $a_0'$  is the timid vertex for $\tau$.
 
 The final contradiction arises since all the $k+1$ vertices in the set $\{\tau^{-1}(c)\} \cup \tau^{-1}(A)\cup \tau^{-1}(B)$ are contained in $[n-k+1,n]$, which is a set with only $k$ elements.
\end{proof}

Combining Propositions \ref{p1} and \ref{p2},  we complete the proof of Theorem \ref{main}.

\vskip 2mm
\noindent{\bf Remark.}  Let $\lambda_1\geq\lambda_2\geq\cdots\geq\lambda_k$ be integers. The \emph{spider} $\Spider(\lambda_1,\lambda_2,\ldots,\lambda_{k})$ is a graph on $n=1+\sum_{i=1}^k \lambda_i$ vertices obtained by connected one of the two ends of each path of order $\lambda_1, \lambda_2, \dots\, \lambda_k$ to a new common vertex.
Defant, Dong, Lee and Wei \cite{DDLW} showed the following.

\begin{theorem}\cite{DDLW}\label{spider}
Let $\lambda_1\geq\cdots\geq\lambda_k$ be positive integers and  $n=\lambda_1+\cdots+\lambda_k+1$. Let $Y$ be a graph on $n$ vertices. If there exists a disconnected induced subgraph of $Y$ with $n-\lambda_1$ vertices, then $\FS(\Spider(\lambda_1,\ldots,\lambda_k),Y)$ is disconnected.  
\end{theorem}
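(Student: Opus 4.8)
The plan is to adapt the invariant argument of Proposition~\ref{p2} to the spider, letting a longest leg $L_1$ play the role of the lollipop's path and the center play the role of the junction $n-k+1$. Write $z$ for the center of $\Spider(\lambda_1,\ldots,\lambda_k)$ and list the vertices of $L_1$ from the leaf inward as $p_1,p_2,\ldots,p_{\lambda_1}$, with $p_{\lambda_1}$ adjacent to $z$; set $p_{\lambda_1+1}=z$ for convenience. (We may assume $k\ge 2$, since for $k=1$ the hypothesis requires a disconnected induced subgraph on one vertex and is vacuous.) Given the disconnected $Y_0$, write $V(Y_0)=A\cup B$ with no edges between $A$ and $B$, and set $C=V(Y)\setminus V(Y_0)$, so that $|A\cup B|=n-\lambda_1$ and $|C|=\lambda_1$. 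I call a vertex $\sigma$ of $\FS(\Spider(\lambda_1,\ldots,\lambda_k),Y)$ \emph{special} if there is some $a_0\in A$ with $\sigma^{-1}(a_0)=p_j$ for some $j\le \lambda_1+1$ and with no $B$-person on the prefix $\{p_1,\ldots,p_j\}$; such an $a_0$ is a \emph{timid} vertex. Placing an $A$-person at the leaf $p_1$ gives a special vertex and placing a $B$-person at $p_1$ gives a non-special one, so both types occur, and it suffices to show that no special vertex is adjacent to a non-special one.

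The engine of the argument is the observation that for every $1\le j\le\lambda_1+1$, each edge of $\Spider$ leaving the prefix $\{p_1,\ldots,p_j\}$ is incident to $p_j$: along the leg each vertex sees only its two path-neighbors, and at the center $z=p_{\lambda_1+1}$ all edges to the remaining legs emanate from $z$ itself. Suppose a special $\sigma$ with timid vertex $a_0$ is adjacent to a non-special $\tau=\sigma\circ(x\ y)$, where $\{x,y\}\in E(\Spider)$ and $\{\sigma(x),\sigma(y)\}\in E(Y)$. If neither $x$ nor $y$ equals $p_j=\sigma^{-1}(a_0)$, then the edge $\{x,y\}$ cannot straddle the boundary of $a_0$'s prefix, so the set of people on that prefix is unchanged and $\tau$ is still special, a contradiction; hence I may take $x=\sigma^{-1}(a_0)=p_j$. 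Since $\sigma(x)=a_0\in A$ is $Y$-adjacent to $\sigma(y)$ and there are no $A$--$B$ edges, $\sigma(y)\notin B$. A short check disposes of the remaining easy cases: if $y=p_{j-1}$ then $a_0$ slides one step toward the leaf and $\tau$ stays special, and if $\sigma(y)\in A$ then this $A$-person sits at $p_j$ over a clean prefix and witnesses specialness; so in fact $\sigma(y)\in C$ and $y$ is the neighbor of $p_j$ pointing away from the leaf.

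I then localize $a_0$ to the center. If $j\le\lambda_1$, the only away-from-leaf neighbor is $y=p_{j+1}$; after the swap $a_0$ occupies $p_{j+1}\in L_1\cup\{z\}$ with a $C$-person at $p_j$, leaving $\{p_1,\ldots,p_{j+1}\}$ free of $B$, so $\tau$ is special, a contradiction. Hence $j=\lambda_1+1$, i.e.\ $\sigma^{-1}(a_0)=z$, and $y$ is the neighbor of $z$ on some leg $L_i$ with $i\ge 2$. Since $a_0$ is timid at $z$, no $B$-person lies on $L_1\cup\{z\}$; and no $A$-person can lie on $L_1$ either, because such a person is fixed by the swap (which touches only $z$ and $y\notin L_1$) and would witness specialness of $\tau$. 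Consequently every vertex of $A\cup B$ is placed in $\{z\}\cup L_2\cup\cdots\cup L_k$. This region has exactly $1+\sum_{i\ge2}\lambda_i=n-\lambda_1=|A\cup B|$ positions, so it is entirely filled by $A\cup B$; but $\sigma(y)\in C$ occupies $y\in L_i$, which lies in this very region, a contradiction. Thus special and non-special vertices lie in different components and $\FS(\Spider(\lambda_1,\ldots,\lambda_k),Y)$ is disconnected.

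The main obstacle, and the part I would write most carefully, is verifying that specialness is genuinely invariant under a single swap, i.e.\ the full case analysis of $\tau=\sigma\circ(x\ y)$ showing that every attempt to destroy specialness either keeps $a_0$ (or a nearby $A$-person) as a valid timid vertex or forces $a_0$ onto the center. The clean boundary observation is what makes these cases uniform. The hypothesis that $\lambda_1$ is a \emph{longest} leg enters at exactly one point but decisively: it is what makes $|A\cup B|=n-\lambda_1$ equal the number of non-$L_1$ positions, so that filling that region with $A\cup B$ leaves no room for the stray $C$-person $\sigma(y)$. For a shorter leg the complement would be strictly larger and this final pigeonhole contradiction would fail.
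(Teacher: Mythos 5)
Your proof is correct, but it takes a different route from the one this paper uses. The paper never re-proves the spider theorem from scratch: it observes that $\Spider(\lambda_1,\ldots,\lambda_k)$ is a spanning subgraph of $\mathsf{Lollipop}_{\lambda_1,\,n-\lambda_1}$, so every swap permitted in the spider is permitted in the lollipop, hence $\FS(\Spider(\lambda_1,\ldots,\lambda_k),Y)$ is a spanning subgraph of $\FS(\mathsf{Lollipop}_{\lambda_1,\,n-\lambda_1},Y)$; applying Proposition~\ref{p2} with $k=n-\lambda_1$ then transfers disconnectedness downward, and the theorem follows in one line. You instead re-run the special/timid-vertex invariant of Proposition~\ref{p2} directly on the spider, with the long leg playing the role of the path and the center playing the role of the junction vertex. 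Your case analysis is sound: the ``clean boundary'' observation (every edge leaving a prefix of $L_1\cup\{z\}$ is incident to its last vertex) correctly replaces the path structure used in the lollipop proof, the localization of the timid vertex to the center goes through, and the final pigeonhole count $1+\sum_{i\ge 2}\lambda_i = n-\lambda_1 = |A\cup B|$ yields the same contradiction as in the paper. What each approach buys: the paper's deduction is essentially free once Proposition~\ref{p2} is in hand (it is exactly why the paper can say Proposition~\ref{p2} \emph{strengthens} the spider theorem), whereas your argument is self-contained, needs no edge-monotonicity fact, and demonstrates that the invariant survives on the strictly sparser graph --- which is closer in spirit to the original proof in \cite{DDLW}. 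One small correction to your closing remark: the ordering $\lambda_1\ge\cdots\ge\lambda_k$ is not what makes the pigeonhole work; the count matches because the hypothesis size $n-\lambda_1$ is tied to the leg $L_1$ you chose, and the identical argument would prove the analogous statement for any leg $L_i$ and a disconnected induced subgraph on $n-\lambda_i$ vertices. The normalization to the longest leg is just the convention under which the theorem is stated.
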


\noindent We can see that Proposition \ref{p2}  strengthens  Theorem \ref{spider} since $\sf{Spider}(\lambda_1,\ldots,\lambda_k)$ is isomorphic to a spanning subgraph of $\sf{Lollipop}_{\lambda_1,n-\lambda_1}$, where $n=\lambda_1+\cdots+\lambda_k+1$.

\section{Open problem}
Let $2\le k \le n$ be integers. The {\it dandelion} graph $\mathsf{Dand}_{n-k,k}$ is a spider of order $n$ with parameters $\lambda_1=n-k$ and $\lambda_2=\dots=\lambda_k=1$, that is, $\mathsf{Dand}_{n-k,k}$ is obtained by identifying one end of a $\sf{Path}_{n-k+1}$ with the center of a $\sf{Star}_k$.
It is clear that $\mathsf{Dand}_{n-2,2}$ is precisely $\mathsf{Lollipop}_{n-2,2}$, and $\mathsf{Dand}_{n-k,k}$ is a proper spanning subgraph of $\mathsf{Lollipop}_{n-k,k}$ for $k\ge 3$.  Defant and Kravitz \cite{DK} showed that $\FS(\mathsf{Lollipop}_{n-3,3},Y)$ is connected if and only if $\FS(\mathsf{Dand}_{n-3,3},Y)$ is connected. This leads us to ask when the edges  not in the spanning subgraph $\mathsf{Dand}_{n-k,k}$ of $\mathsf{Lollipop}_{n-k,k}$ are not  necessary for the connectedness of $\FS(\mathsf{Lollipop}_{n-k,k},Y)$. More precisely, we have the following.

  \begin{problem}\label{p}
 For what  $k$ and $n$, it holds that $\FS(\mathsf{Lollipop}_{n-k,k},Y)$ is connected if and only if $\FS(\mathsf{Dand}_{n-k,k},Y)$ is connected?
\end{problem}



\noindent By Theorem \ref{main} and a result due to Defant, Dong, Lee and Wei \cite{DDLW}, one can see that  the statement holds   for $n\ge 2k-1$.  On the other hand,  by Theorem \ref{complete} and a result of Wilson \cite{W}, the statement is false for $n=k$. 


\section*{Acknowledgments}

 This research was supported by NSFC under grant numbers  12161141003 and 11931006.


\end{document}